\documentclass[11pt,reqno]{amsart}
\usepackage{amsfonts,amssymb,amsmath,amsthm,adjustbox}
\usepackage[margin=1.25in]{geometry}
\usepackage[hidelinks]{hyperref}
\usepackage{enumerate}
\usepackage{microtype}
\usepackage{mathtools}
\usepackage{cite}
\usepackage{color}
\usepackage{tikz-cd}
\usepackage{float}
\usepackage[foot]{amsaddr}
\usepackage{orcidlink}

\def\Z{\mathbb{Z}}
\def\Q{\mathbb{Q}}

\def\F{\mathbb{F}}

\def\im{\operatorname{im}}

\def\rank{\operatorname{rank}}

\theoremstyle{plain}
\newtheorem{theorem}{Theorem}

\newtheorem{proposition}[theorem]{Proposition}
\theoremstyle{definition}

\tikzset{
    solid edge/.style={thick},
    hidden edge/.style={thick,dashed},
    face light/.style={fill=black!5},
    face medium/.style={fill=black!10},
    face dark/.style={fill=black!15},
    center dot/.style={circle, fill=black, inner sep=1.5pt},
    caption/.style={font=\small},
    dim label/.style={font=\small, inner sep=1.5pt}
}

\title{Rational Solids with Equal Surface Area and Volume}
\date{\today}

\author{Kate Finnerty}
\address{Kate Finnerty, Department of Mathematics, Harvard University, Cambridge, MA}

\author{Jacob Mayle}
\address{Jacob Mayle, Department of Mathematical Sciences, University of Delaware, Newark, DE}

\subjclass[2020]{Primary 14G05; Secondary 11G30, 11Y50}

\begin{document}

\begin{abstract}
We classify pairs consisting of a right square pyramid and a right square prism, each having rational base side length and rational height, for which the two solids have equal surface area and equal volume. We prove that, up to scaling by a common positive rational factor, there is a unique such pair. The problem reduces to determining the rational points on a genus \(2\) bielliptic curve whose Jacobian has rank \(2\). We accomplish this using a quadratic Chabauty computation followed by the Mordell--Weil sieve. The same analysis gives an analogous classification for right circular cones and right circular cylinders.
\end{abstract}

\maketitle

\section{Introduction} \label{S:1}

Consider the two solids shown below. On the left is a right square pyramid
with base side length \(10\) and height \(12\). On the right is a right square prism with base side length \(10\) and height \(4\). \smallskip

\begin{center}
\begin{tikzpicture}[
    scale=0.35,
    line join=round,
    line cap=round,
    every node/.style={font=\small}
]

\coordinate (A) at (0,0);
\coordinate (B) at (10,0);
\coordinate (C) at (13,2.6);
\coordinate (D) at (3,2.6);
\coordinate (O) at (6.5,1.3);
\coordinate (E) at (6.5,13.3);

\fill[face light]  (A)--(B)--(C)--(D)--cycle;
\fill[face medium] (A)--(B)--(E)--cycle;
\fill[face dark]   (B)--(C)--(E)--cycle;

\draw[solid edge] (A)--node[dim label,midway,below] {$10$} (B)--(C);
\draw[solid edge] (E)--(A);
\draw[solid edge] (E)--(B);
\draw[solid edge] (E)--(C);

\draw[hidden edge] (C)--(D)--(A);
\draw[hidden edge] (E)--(D);

\draw[hidden edge, dotted] (E)--node[dim label,midway,left] {$12$} (O);
\node[center dot] at (O) {};

\begin{scope}[shift={(19,0)}]

\coordinate (P0) at (0,0);
\coordinate (P1) at (10,0);
\coordinate (P2) at (13,2.6);
\coordinate (P3) at (3,2.6);

\coordinate (Q0) at (0,4);
\coordinate (Q1) at (10,4);
\coordinate (Q2) at (13,6.6);
\coordinate (Q3) at (3,6.6);

\fill[face medium] (P0)--(P1)--(Q1)--(Q0)--cycle;
\fill[face dark]   (P1)--(P2)--(Q2)--(Q1)--cycle;
\fill[face light]  (Q0)--(Q1)--(Q2)--(Q3)--cycle;

\draw[solid edge] (P0)--node[dim label,midway,below] {$10$} (P1)--(P2);
\draw[solid edge] (Q0)--(Q1)--(Q2)--(Q3)--cycle;
\draw[solid edge] (P0)--(Q0);
\draw[solid edge] (P1)--node[dim label,midway,left] {$4$} (Q1);
\draw[solid edge] (P2)--(Q2);

\draw[hidden edge] (P0)--(P3)--(P2);
\draw[hidden edge] (P3)--(Q3);
\end{scope}

\end{tikzpicture}
\end{center}

\medskip \noindent
The slant height of the pyramid is \[\sqrt{5^2 + 12^2} = 13.\]
We compute the surface area and volume of each solid:
\begin{alignat*}{2}
\operatorname{Area}(\text{Pyramid}) &= 10^2+4(\tfrac{1}{2}(10)(13))=360
\qquad &
\operatorname{Vol}(\text{Pyramid}) &= \tfrac{1}{3}(10^2)(12)=400 \\
\operatorname{Area}(\text{Prism}) &= 2(10^2)+4(10)(4)=360
\qquad &
\operatorname{Vol}(\text{Prism}) &= (10^2)(4)=400.
\end{alignat*}
The pyramid and the prism have equal surface area and equal volume. Moreover, all the dimensions used above to define these solids are rational. We call a right square pyramid or a right square prism \emph{rational} if its base side length and height are both positive rational numbers. Throughout, \emph{surface area} means total surface area, including every face.

The example above belongs to an infinite family: for each positive rational number \(\lambda\), consider the pyramid with base side length \(10\lambda\) and height \(12\lambda\), together with the prism with base side length \(10\lambda\) and height \(4\lambda\). These solids have common surface area \(360\lambda^2\) and common volume \(400\lambda^3\). In this article, we prove that these are the only such examples. 

\begin{theorem}\label{thm:main}
Suppose that a rational right square pyramid and a rational right square prism have equal surface area and equal volume. Then, up to scaling by a common positive rational factor, their dimensions are as follows:
\begin{center}
the pyramid has base side length \(10\) and height \(12\), and\\
the prism has base side length \(10\) and height \(4\).
\end{center}
Conversely, every pair obtained by scaling these dimensions by a common positive rational factor has equal surface area and equal volume.
\end{theorem}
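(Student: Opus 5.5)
We plan to reduce the problem to finding the rational points on an explicit genus $2$ curve. Let the pyramid have base side $a$ and height $h$, and the prism have base side $b$ and height $k$, all positive rationals.

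\emph{Reduction to a curve.} The volume condition is $\tfrac13 a^2h=b^2k$. The area condition is
\[
a^2+2a\sqrt{a^2/4+h^2}=2b^2+4bk .
\]
In the area condition every term other than the square root is rational. Hence the slant height $s=\sqrt{a^2/4+h^2}$ must be rational.

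Both conditions are homogeneous (of degree $3$ and $2$ respectively). So we may scale so that $a=2$. Then $s^2-h^2=1$, and we parametrize this conic rationally by
\[
h=\frac{t^2-1}{2t},\qquad s=\frac{t^2+1}{2t},\qquad t>1 .
\]
The volume condition gives $k=4h/(3b^2)$. Substituting into the area condition $4+4s=2b^2+4bk$ and clearing denominators (using $1+s=(t+1)^2/(2t)$) yields
\[
(8-6b)\,t^2+(6b^3-12b)\,t-(6b+8)=0 .
\]
For fixed rational $b$, this has a rational root $t$ exactly when its discriminant is a rational square. The discriminant is $36b^6-144b^4+256$, so after removing the factor $4$ we obtain the curve
\[
C:\; y^2=9b^6-36b^4+64 .
\]
This is a genus $2$ curve, and it is bielliptic because the right-hand side is even in $b$. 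It therefore maps to the two elliptic curves $E_1: Y^2=9X^3-36X^2+64$ and $E_2$, the latter coming from the substitution $X=1/b^2$. The known example, rescaled by $1/5$, gives $a=2$, $h=12/5$, $b=2$, $k=4/5$. It corresponds to the point $(b,y)=(2,\pm 8)$ on $C$, since $9\cdot 64-36\cdot 16+64=64$.

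\emph{Determining $C(\Q)$.} We will compute the ranks of $E_1$ and $E_2$. We expect each to have rank $1$, so the Jacobian $J(C)\sim E_1\times E_2$ has rank $2$. Then neither classical Chabauty--Coleman nor elliptic Chabauty on a single quotient applies directly. This is the main obstacle.

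We will use quadratic Chabauty for bielliptic genus $2$ curves, in the style of Balakrishnan--Dogra, at a suitable prime $p$ of good reduction where the relevant points are non-torsion and the local height computations are feasible. This produces a finite set of $p$-adic points containing $C(\Q)$. Some of these points may be extra $p$-adic solutions that we cannot identify as rational. We will eliminate them with the Mordell--Weil sieve, using generators of $J(\Q)$ assembled from the generators of $E_1(\Q)$ and $E_2(\Q)$, together with reduction modulo several auxiliary primes. We expect the result to be that $C(\Q)$ consists of a short explicit list: the points at infinity, $b=0$, $b=\pm2$, and possibly a few others.

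\emph{Back-substitution.} For each rational point we recover $t$ from the quadratic and then $h$, $s$, $k$. We then discard every solution that violates positivity, namely $b\le 0$, $t\le1$ (which forces $h\le 0$), or $k\le0$. We expect only $b=2$, $t=5$ to survive, giving $(a,h,b,k)=(2,12/5,2,4/5)$. Scaling by $5$ gives the stated pair $(10,12,10,4)$.

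The converse is the direct computation already carried out in the introduction, scaled by $\lambda^2$ for area and $\lambda^3$ for volume.
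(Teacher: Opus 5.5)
Your proposal is correct and follows essentially the same route as the paper. Your curve $y^2=9b^6-36b^4+64$ is isomorphic to the paper's $y^2=x^6-9x^2+9$ via $b=2/x$, $y\mapsto 8y/x^3$; you fix the pyramid's base rather than the prism's and parametrize the conic $s^2-h^2=1$ instead of solving for $s$, and the paper determines the rational points by the same quadratic Chabauty plus Mordell--Weil sieve strategy you describe. One detail your expected list omits: the complete set in your coordinates also contains $(\pm\tfrac43,\pm\tfrac89)$, and at $b=\tfrac43$ the leading coefficient $8-6b$ vanishes, so the equation is linear with root $t=-9$, which your positivity check discards, exactly as the paper discards $x=3/2$.
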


In our definition of a rational right square pyramid, we do not require the slant height or the lateral edge length to be rational. The slant height is automatically rational for every pair satisfying the hypotheses of Theorem~\ref{thm:main}, as we observe in Section~\ref{S:2}. By contrast, the lateral edge length is irrational for the family in Theorem~\ref{thm:main}. Indeed, this length is
\[
\sqrt{(12\lambda)^2+(5\lambda)^2+(5\lambda)^2}
    =\sqrt{194}\,\lambda,
\]
which is irrational for every positive rational number \(\lambda\). Thus, there are no pairs satisfying the hypotheses of Theorem~\ref{thm:main}
if the lateral edge length is also required to be rational.

Questions concerning rational measurements in Euclidean geometry often lead to the arithmetic of algebraic curves. The classical congruent number problem, for example, asks which positive integers occur as the areas of right triangles with rational side lengths. Such integers are called \textit{congruent numbers}. For a fixed positive integer \(n\), the question of whether \(n\) is a congruent number is equivalent to determining whether the elliptic curve
\[ y^2=x^3-n^2x\] 
has positive Mordell--Weil rank; see, for example, \cite[Chapter~I]{MR1216136} or \cite[Section~1]{MR2757255}.

More recently, Hirakawa and Matsumura \cite{MR3860476, MR4474851} proved that, up to rational scaling,
there is a unique pair consisting of a rational right triangle and a rational isosceles triangle having equal perimeter and equal area. Their proof reduces the problem to determining the rational points on a genus \(2\) curve whose Jacobian has Mordell--Weil rank \(1\). Andrica and \c{T}urca\c{s} \cite{MR4203579} studied other pairs of symmetric invariants for rational right and isosceles triangles, and further equal-invariant problems for rational triangles appear in \cite{MR4346281, MR4484250}. Questions concerning a single solid whose surface area and volume are equal have been considered in\cite{Dart, RabagoTagle}.

Although the statement of Theorem~\ref{thm:main} is elementary, its proof leads naturally into modern arithmetic geometry. Equating the formulas for the surface areas and volumes of the two solids produces a system of polynomial equations. We show that the rational solutions to this system correspond to rational points on the algebraic curve \(C\) defined by
\[
y^2=x^6-9x^2+9.
\]
This curve has genus \(2\), is bielliptic over $\Q$, and its Jacobian has Mordell--Weil rank \(2\). Curves of genus at least \(2\) have only finitely many rational points by Faltings' theorem \cite{MR718935}. However, proving that one has determined all rational points can be challenging. For our curve $C$, the rank is equal to the genus, so the Mordell--Weil rank hypothesis for the classical Chabauty--Coleman method is not satisfied. Instead, we use the techniques described by Bianchi and Padurariu \cite{bianchi2020bielliptic,bianchi2022rational}; see also \cite[Sections~5--7]{MR3614022}. We first apply quadratic Chabauty to obtain a finite collection of \(p\)-adic points up to a finite precision containing \(C(\Q)\), and then use the Mordell--Weil sieve of Bruin and Stoll \cite{MR2685127} to eliminate the \(p\)-adic points that do not arise from rational points. In this way, we prove the following proposition.

\begin{proposition}\label{prop:rational-points}
The rational points on \(C\) are
\[
C(\Q)=
\left\{
\infty^+,\infty^-,
(0,\pm3),
(\pm1,\pm1),
\left(\pm\tfrac{3}{2},\pm\tfrac{3}{8}\right)
\right\},
\]
where \(\infty^+\) and \(\infty^-\) are the two points at infinity on \(C\). In the last two expressions, the signs may be chosen independently.
\end{proposition}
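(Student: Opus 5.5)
The plan follows the strategy outlined above for bielliptic genus \(2\) curves of rank \(2\), in the framework of Bianchi and Padurariu. First we check directly that the listed points lie on \(C\). Indeed \(x=\pm 1\) gives \(y^2=1\), \(x=0\) gives \(y^2=9\), and \(x=\pm\tfrac32\) gives \(\tfrac{729}{64}-\tfrac{81}{4}+9=\tfrac{9}{64}\).

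Next we exploit the bielliptic structure. Since \(f(x)=x^6-9x^2+9\) is even, \(C\) has the involutions \((x,y)\mapsto(-x,y)\) and \((x,y)\mapsto(-x,-y)\). These give degree-\(2\) maps \(C\to E_1\colon Y^2=X^3-9X+9\) via \((X,Y)=(x^2,y)\), and \(C\to E_2\colon Y^2=9X^3-9X^2+1\) via \((X,Y)=(1/x^2,\,y/x^3)\). The second model can be put in Weierstrass form by rescaling. Hence \(J(C)\) is isogenous to \(E_1\times E_2\). Using Magma's two-descent, together with explicit generators coming from the images of the known points, we will show that \(E_1(\Q)\) and \(E_2(\Q)\) each have rank exactly \(1\). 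This gives \(\rank J(C)(\Q)=2\), which is the hypothesis needed for quadratic Chabauty. We also record generators and torsion for each \(E_i(\Q)\).

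Then we choose a prime \(p\) of good ordinary reduction for both \(E_i\), for instance a small prime such as \(p=5\) or \(7\) after checking that \(a_p\not\equiv 0 \pmod p\). Bianchi's quadratic Chabauty function for bielliptic curves is built from the \(p\)-adic heights \(h_{E_i}\) and the local heights \(\lambda_{E_i,p}\) pulled back to \(C\). Its key property is that \(\rho(z)=h_{E_1}(\phi_1 z)-h_{E_2}(\phi_2 z)-\lambda_{E_1,p}(\phi_1 z)+\lambda_{E_2,p}(\phi_2 z)\) takes only finitely many values on \(C(\Q)\). These values are determined by the local contributions at the bad primes, which divide the discriminant of \(f\), essentially \(2\) and \(3\). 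Since the \(E_i\) have rank \(1\), the global heights are \(h_{E_i}=\alpha_i\log_{E_i}^2\), with \(\alpha_i\) fixed by a known point of infinite order. We then solve \(\rho(z)\in T\) residue disc by residue disc to a fixed \(p\)-adic precision, where \(T\) is the finite set of possible values. This produces a finite list of \(p\)-adic points containing \(C(\Q)\), and we match the known points in this list.

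The main obstacle we expect is that the quadratic Chabauty output will contain extra \(p\)-adic points not accounted for by rational ones. These must be excluded with the Mordell--Weil sieve of Bruin and Stoll. We will embed \(C\) into \(J(C)\) using the base point \(\infty^+\). For each spurious \(p\)-adic point we compute its image in \(J(C)(\Q)/NJ(C)(\Q)\), through its \(p\)-adic logarithm modulo a suitable power of \(p\), for an auxiliary integer \(N\) divisible by a power of \(p\). We then combine this with reduction modulo auxiliary primes \(q\) of good reduction, chosen so that the orders of \(J(\F_q)\) are smooth with factors matching \(N\). For each spurious point we aim to show that its residue class is incompatible with the images of \(C(\F_q)\) for the chosen \(q\). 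Once every extraneous point is eliminated, \(C(\Q)\) is exactly the list in Proposition~\ref{prop:rational-points}. If the sieve fails to close, we would rerun quadratic Chabauty at a second prime and intersect the results.
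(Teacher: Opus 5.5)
Your proposal follows essentially the same route as the paper. It uses Bianchi--Padurariu quadratic Chabauty for the bielliptic curve with its two rank-one elliptic quotients at good ordinary primes. It then applies the Bruin--Stoll Mordell--Weil sieve to the Mordell--Weil congruence classes of the extra $p$-adic points, read off from $p$-adic logarithms. In practice one prime was not enough, and the paper:
\begin{itemize}
\item runs quadratic Chabauty at $p=5,7,11$;
\item matches candidates across primes by their local height class at $2$ and $3$;
\item combines the congruences modulo $5^4\cdot 11^4$ by CRT;
\item refines the modulus by a factor of $3$ to handle the $\Z/3\Z$ torsion of $J(\Q)$.
\end{itemize}
This is the kind of multi-prime fallback you anticipated.
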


Substituting the points in Proposition~\ref{prop:rational-points} back into the geometric equations gives the unique positive solution in Theorem~\ref{thm:main}. The argument used to prove Theorem~\ref{thm:main} also yields an analogous classification for right circular cones and right circular cylinders. We call either of these solids \emph{rational} if its radius and height are both positive rational numbers.

\begin{theorem}\label{thm:cone-cylinder}
Suppose that a rational right circular cone and a rational right circular cylinder have equal surface area and equal volume. Then, up to scaling by a common positive rational factor, their dimensions are as follows:
\begin{center}
the cone has radius \(5\) and height \(12\), and \\
the cylinder has radius \(5\) and height \(4\).
\end{center}
Conversely, every pair obtained by scaling these dimensions by a common positive rational factor has equal surface area and equal volume.
\end{theorem}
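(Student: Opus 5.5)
The plan is to reduce Theorem~\ref{thm:cone-cylinder} directly to Theorem~\ref{thm:main}. The idea is that, after cancelling \(\pi\), the cone--cylinder equations are a rescaling of the pyramid--prism equations. No further arithmetic geometry should be needed: the curve \(C\) and Proposition~\ref{prop:rational-points} enter only through Theorem~\ref{thm:main}.

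First I write down the equations for the round solids. Take a cone of radius \(r\) and height \(h\), with slant height \(s=\sqrt{r^2+h^2}\), and a cylinder of radius \(R\) and height \(H\). Equal surface area and equal volume mean
\[
\pi r^2+\pi r s = 2\pi R^2+2\pi RH, \qquad \tfrac{1}{3}\pi r^2 h = \pi R^2 H .
\]
Dividing by \(\pi\) gives
\[
r^2 + r\sqrt{r^2+h^2} = 2R^2+2RH, \qquad r^2h = 3R^2H. \tag{\(\ast\)}
\]

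Next I write down the equations for the square solids. Take a pyramid with base side \(a\) and height \(h\); its slant height is \(\sqrt{h^2+a^2/4}\). Take a prism with base side \(b\) and height \(H\). The conditions are
\[
a^2+2a\sqrt{h^2+a^2/4} = 2b^2+4bH, \qquad a^2h = 3b^2H.
\]
Substitute \(a=2r\) and \(b=2R\). The left side of the area equation becomes \(4r^2+4r\sqrt{r^2+h^2}\) and the right side becomes \(8R^2+8RH\). The volume equation becomes \(4r^2h=12R^2H\). Each of these is exactly \(4\) times the corresponding equation in \((\ast)\).

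It follows that the map \((r,h,R,H)\mapsto(2r,h,2R,H)\) is a bijection between two sets of positive rational quadruples. The first set consists of cone--cylinder pairs with equal area and volume. The second consists of pyramid--prism pairs with equal area and volume. This map preserves rationality and positivity in both directions, and it commutes with scaling by a common positive rational factor \(\lambda\).

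Applying Theorem~\ref{thm:main}, every pyramid--prism solution is a rational multiple of the one with \(a=10\), \(h=12\), \(b=10\), \(H=4\). Pulling this back through the bijection, every cone--cylinder solution is a multiple of the one with \(r=5\), \(h=12\), \(R=5\), \(H=4\). For the converse, I check directly that this pair works:
\begin{itemize}
\item both areas are \(90\pi\), since the cone's slant height is \(13\);
\item both volumes are \(100\pi\).
\end{itemize}
Scaling by \(\lambda\) multiplies every area by \(\lambda^2\) and every volume by \(\lambda^3\), so the equalities persist. There is no real obstacle: the only point needing care is confirming that the slant-height terms match under the substitution \(a=2r\), which the computation above does.
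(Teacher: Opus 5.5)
Your proposal is correct and follows the paper's argument. The paper also lets the diameter play the role of the square's side length (your substitution \(a=2r\), \(b=2R\)), so that after cancelling a factor of \(\pi\) the cone--cylinder system becomes the pyramid--prism system and the classification follows from the pyramid--prism case; your explicit scaling-compatible bijection, followed by a direct appeal to Theorem~\ref{thm:main}, is a slightly more formal packaging of the same reduction.
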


The figure below depicts the cone and cylinder described in Theorem~\ref{thm:cone-cylinder}. One can check that both solids have surface area \(90\pi\) and volume \(100\pi\). \medskip

\begin{center}

\begin{tikzpicture}[
    scale=0.35,
    line join=round,
    line cap=round,
    every node/.style={font=\small}
]

\begin{scope}[shift={(0,0)}]

\def\rx{5}
\def\ry{1.3}
\def\H{12}

\coordinate (Obase) at (0,0);
\coordinate (Apex) at (0,\H);
\coordinate (L) at (-\rx,0);
\coordinate (R) at (\rx,0);

\fill[face medium] (0,0) ellipse[x radius=\rx,y radius=\ry];

\fill[face medium]
    (L) arc[start angle=180,end angle=360,x radius=\rx,y radius=\ry]
    -- (Apex) -- cycle;

\draw[solid edge] (Apex)--(L);
\draw[solid edge] (Apex)--(R);

\draw[hidden edge] (R) arc[start angle=0,end angle=180,x radius=\rx,y radius=\ry];
\draw[solid edge]  (L) arc[start angle=180,end angle=360,x radius=\rx,y radius=\ry];

\draw[hidden edge, dotted] (Apex)--node[dim label,midway,right] {$12$} (Obase);

\draw[hidden edge, dotted] (Obase)--node[dim label,midway,above] {$5$} (R);

\node[center dot] at (Obase) {};

\end{scope}

\begin{scope}[shift={(16,0)}]

\def\rx{5}
\def\ry{1.3}
\def\h{4}

\coordinate (Cbot) at (0,0);
\coordinate (Ctop) at (0,\h);
\coordinate (Lbot) at (-\rx,0);
\coordinate (Rbot) at (\rx,0);
\coordinate (Ltop) at (-\rx,\h);
\coordinate (Rtop) at (\rx,\h);

\fill[face medium]
    (Lbot)--(Ltop)
    arc[start angle=180,end angle=360,x radius=\rx,y radius=\ry]
    --(Rbot)
    arc[start angle=0,end angle=180,x radius=\rx,y radius=\ry]
    --cycle;

\fill[face medium] (0,0) ellipse[x radius=\rx,y radius=\ry];
\fill[face light] (0,\h) ellipse[x radius=\rx,y radius=\ry];

\draw[solid edge] (Lbot)--(Ltop);
\draw[solid edge] (Rbot)--node[dim label,midway,right] {$4$} (Rtop);

\draw[solid edge] (0,\h) ellipse[x radius=\rx,y radius=\ry];

\draw[hidden edge] (Rbot) arc[start angle=0,end angle=180,x radius=\rx,y radius=\ry];
\draw[solid edge]  (Lbot) arc[start angle=180,end angle=360,x radius=\rx,y radius=\ry];

\draw[hidden edge, dotted] (Cbot)--node[dim label,midway,above] {$5$} (Rbot);

\node[center dot] at (Cbot) {};

\end{scope}

\end{tikzpicture}
\end{center}

\medskip The remainder of the article is organized as follows. In Section~\ref{S:2}, we pass from the geometric problem to the genus \(2\) curve \(C\). In Section~\ref{S:3}, we determine \(C(\Q)\) using quadratic Chabauty and the Mordell--Weil sieve. In Section~\ref{S:4}, we translate this arithmetic result back into geometry to prove Theorems~\ref{thm:main} and~\ref{thm:cone-cylinder}. The computations in this article were carried out using \texttt{Magma} \cite{MR1484478} and \texttt{SageMath} \cite{sagemath}. The code is available in the GitHub repository \cite{projectcode} associated with this article: \url{https://github.com/kfinnerty19/rationalsolids}.

\section{Reduction to a genus 2 curve} \label{S:2}

We now move from the geometric problem posed in Theorem~\ref{thm:main} to the curve \(C\) appearing in Proposition~\ref{prop:rational-points}. Scaling all the dimensions of the two solids by the same positive rational factor preserves the equalities of their surface areas and volumes. Since the base side length of the prism is positive and rational, we may therefore normalize this length to be \(1\).

Let \(x\) and \(h\) denote the base side length and height of the pyramid, respectively, and let \(s\) denote its slant height. Let \(b\) denote the height of the prism. The surface area of the prism is \(2+4b\) and the surface area of the pyramid is \(x^2+2xs\). Thus the equality of surface areas gives
\begin{equation}\label{E:Area}
2+4b=x^2+2xs.
\end{equation}
Similarly, the equality of volumes gives
\begin{equation}\label{E:Vol}
b=\frac{x^2h}{3}.
\end{equation}
Finally, the slant height of the pyramid satisfies
\begin{equation} \label{E:Slant}
    s^2=h^2+\frac{x^2}{4}.
\end{equation}

By hypothesis, \(x,h\), and \(b\) are all rational. Solving \eqref{E:Area} for \(s\) and then using \eqref{E:Vol}, we obtain
\begin{equation}\label{E:s-in-terms-of-x-h}
s
=\frac{2+4b-x^2}{2x}
=\frac{1}{x}+\frac{2b}{x}-\frac{x}{2}
=\frac{1}{x}+\frac{2xh}{3}-\frac{x}{2}.
\end{equation}
In particular, \(s\) is rational. This proves the assertion made after Theorem~\ref{thm:main} that the slant height is automatically rational for every pair under consideration.

Substituting \eqref{E:s-in-terms-of-x-h} into \eqref{E:Slant} gives
\[
\left(\frac{1}{x}+\frac{2xh}{3}-\frac{x}{2}\right)^2
    =h^2+\frac{x^2}{4}.
\]
After clearing denominators and collecting the terms involving \(h\), this becomes
\begin{equation}\label{E:quadratic-h}
(4x^4-9x^2)h^2+(-6x^4+12x^2)h+9-9x^2=0.
\end{equation}
We regard the left-hand side as a polynomial in \(h\). Notice that for \(x > 0\), the leading coefficient vanishes if and only if \(x=3/2\). Substituting this value into \eqref{E:quadratic-h} gives
\[
-\frac{27}{8}h-\frac{45}{4}=0,
\]
and hence
\(h=-10/3\), contrary to the geometric requirement that \(h\) be positive. We may therefore assume that $x \neq 3/2$, so \eqref{E:quadratic-h} is a genuine quadratic equation in \(h\).

Since this quadratic has rational coefficients and has a rational root \(h\), its discriminant must be a rational square. The discriminant is
\[
\Delta
=(-6x^4+12x^2)^2 - 4(4x^4-9x^2)(9-9x^2)
=36x^2(x^6-9x^2+9).
\]
Since $36x^2=(6x)^2$ is a nonzero rational square, there must exist \(y \in \Q\) such that
\begin{equation}
    \label{E:C_eq}
    y^2=x^6-9x^2+9.
\end{equation}
Thus, every geometric solution, subject to the normalization we have chosen, determines a rational point on the genus \(2\) curve \(C\) defined by \eqref{E:C_eq}. 

Conversely, given an affine rational point $(x,y) \in C(\Q)$ with \(x > 0\) and \(x \neq 3/2\), define
\begin{equation*}
h=\frac{3(x^3-2x+y)}{x(4x^2-9)},\qquad
b=\frac{x^2h}{3},\qquad
s=\frac1x+\frac{2xh}{3}-\frac{x}{2}.
\end{equation*}
The first formula is obtained from the quadratic formula applied to \eqref{E:quadratic-h}; changing \(y\) to \(-y\) gives the other root. The values of \(b\) and \(s\) are then forced by \eqref{E:Vol} and \eqref{E:s-in-terms-of-x-h}. Thus, we obtain a normalized geometric solution precisely when \(h,b\), and \(s\) are all positive. For later reference, the point \((1,-1)\in C(\Q)\) gives the geometric solution
\begin{equation*}
x=1,\qquad h=\frac65,\qquad b=\frac25,\qquad s=\frac{13}{10},
\end{equation*}
whereas the point \((1,1)\) gives a degenerate solution with \(h=0\).

The cone-and-cylinder problem leads to exactly the same normalized equations. Indeed, let \(x\) denote the diameter of the cone and normalize the cylinder so that its diameter is \(1\). One could work with the radius, but we find it slightly cleaner to use the diameter. The respective base areas are \(\pi x^2/4\) and \(\pi/4\), and their base circumferences are \(\pi x\) and \(\pi\). After the common factor \(\pi/4\) is canceled from the surface area and volume equations, the preceding analysis applies without modification.

\section{Rational points on the genus 2 curve} \label{S:3}

The polynomial \(x^6-9x^2+9\) is squarefree, with discriminant
\[
\operatorname{disc}(x^6-9x^2+9) = -2^6 \cdot 3^{14}.
\]
Thus the curve $C$ defined in \eqref{E:C_eq} is smooth of genus $2$ and has good reduction away from $2$ and $3$. We consider $C$ as a smooth projective curve. Since the defining polynomial has even degree
and a square leading coefficient, $C$ has two rational points at infinity, which we denote by \(\infty^+\) and \(\infty^-\). A naive point search using a bound of $10^4$ finds the twelve rational points listed in Proposition~\ref{prop:rational-points}. The challenge that remains is to prove that there are no others.

\subsection{Bielliptic structure and the Mordell--Weil group} \label{S:3.1}

A curve is \emph{bielliptic} if it admits a degree \(2\) map to an elliptic curve. As explained by Wetherell \cite[Section~4.1]{wetherell97thesis}, a genus \(2\) curve over $\Q$ is bielliptic if and only if it admits an even sextic model of the form
\begin{equation*}
    y^2=a_6x^6+a_4x^4+a_2x^2+a_0
\end{equation*}
with coefficients in $\Q$. In the present case, \eqref{E:C_eq} already has this form. The involutions \((x,y)\mapsto(-x,y)\) and \((x,y)\mapsto(-x,-y)\) give degree \(2\) quotient maps to elliptic curves. We use the following global minimal Weierstrass models for the two quotient curves, together with their quotient maps:
\begin{alignat*}{2}
E_1&: \, y^2=x^3-9x+9, \qquad &\phi_1(x,y)&=(x^2,y) \\
E_2&: \, y^2=x^3-27x+27, \qquad &\phi_2(x,y)&=\left(\frac9{x^2}-3,\frac{9y}{x^3}\right).
\end{alignat*}
These maps extend uniquely across $x = 0$ and the points at infinity to morphisms between the corresponding smooth projective curves.

Let $J = \operatorname{Jac}(C)$ be the Jacobian of $C$. For \(i=1,2\), let
\[
\phi_i^* \colon H^0(E_i,\Omega^1) \longrightarrow H^0(C,\Omega^1)
\]
denote pullback of regular differentials. The images of \(\phi_1^*\) and
\(\phi_2^*\) are linearly independent and together span
\(H^0(C,\Omega^1)\). In this way, the pair \((\phi_1,\phi_2)\) induces a
rational isogeny $J \to E_1 \times E_2$. One can check that both $E_1$ and $E_2$ have rank $1$, so
\[
\rank J = \rank E_1 + \rank E_2 = 1 + 1 = 2.
\]
In fact, we verify with the accompanying \texttt{Magma} code that
\begin{equation}\label{E:MW-group}
J(\Q)
=
\langle D_1,D_2\rangle\oplus\langle T\rangle
\cong \Z^2\oplus\Z/3\Z,
\end{equation}
where \(D_1=[\infty^+-\infty^-]\) and \(D_2=[(-1,-1)-\infty^-]\) form a basis for the free part of \(J(\Q)\) and \(T\) is a rational point of order \(3\). We use these generators for the remainder of the article. In our computations, we differentiate between $\infty^+$ and $\infty^-$ as the points on which the rational function $\frac{y}{x^3}$ equal positive and negative one, respectively.

\subsection{Applying quadratic Chabauty} \label{S:3.2}

Quadratic Chabauty builds on the classical method of Chabauty and Coleman \cite{chabauty1941points,coleman1985effective}. We first recall the classical setup. Let $C/\Q$ be a curve of genus $g \geq 2$ and Jacobian $J$. Fix a rational base point \(P_0\in C(\Q)\), and let
\[
\iota \colon C\longrightarrow J,
\qquad
P\longmapsto[(P)-(P_0)]
\]
be the Abel--Jacobi map.

\begin{theorem}[Chabauty, {\cite{chabauty1941points}}]\label{thm:classical-chabauty}
Let \(p\) be a prime of good reduction for \(C\). If \(\rank J(\Q)<g\), then the intersection of \(\iota(C(\Q_p))\) with the \(p\)-adic closure of \(J(\Q)\) in \(J(\Q_p)\) is finite.
\end{theorem}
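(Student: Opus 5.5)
The plan follows Chabauty's original strategy. Set $r=\rank J(\Q)<g$ and let $\overline{J(\Q)}$ be the $p$-adic closure of $J(\Q)$ in $J(\Q_p)$. The idea is to show that $\overline{J(\Q)}$ is a $p$-adic Lie subgroup of dimension at most $r$. From this we produce a nonzero regular differential on $J$ whose logarithm vanishes on it. Finally we restrict that differential to the curve and use the fact that a nonzero $p$-adic analytic function has isolated zeros.

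\textbf{Step 1 (the $p$-adic logarithm).} The group $J(\Q_p)$ is a compact commutative $p$-adic Lie group of dimension $g$. We have the logarithm
\[
\log\colon J(\Q_p)\to H^0(J_{\Q_p},\Omega^1)^\vee\cong\Q_p^g,
\qquad
\langle \log Q,\omega\rangle=\int_0^Q\omega .
\]
It is a homomorphism with finite kernel, namely the torsion of $J(\Q_p)$. On a sufficiently small open subgroup $U$ it is a local isomorphism onto a lattice neighborhood of $0$. Because $J(\Q_p)$ is compact, $U$ has finite index.

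\textbf{Step 2 (bounding the closure).} The group $J(\Q)$ is finitely generated of rank $r$, by Mordell--Weil. Hence $J(\Q)\cap U$ is generated by $r$ elements modulo torsion. Its image under $\log$ therefore spans a $\Z_p$-submodule of $\Q_p^g$ of rank at most $r$. Taking closures, $\log(\overline{J(\Q)}\cap U)$ lies in a $\Q_p$-subspace $V$ of dimension at most $r<g$.

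\textbf{Step 3 (an annihilating differential).} Since $\dim V<g$, there is a nonzero $\omega\in H^0(J_{\Q_p},\Omega^1)$ with $\int_0^Q\omega=0$ for all $Q\in\overline{J(\Q)}\cap U$. Now let $Q\in\overline{J(\Q)}$ be arbitrary. Some multiple $nQ$ lies in $U$, where $n$ is the index of $U$. It follows that $n\int_0^Q\omega=0$, so the integral vanishes on all of $\overline{J(\Q)}$.

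\textbf{Step 4 (finiteness on each residue disc).} We pull back along $\iota$; the map $\iota^*$ identifies differentials on $J$ with $H^0(C,\Omega^1)$. Then $f(P)=\int_{P_0}^P\iota^*\omega$ vanishes on $\iota(C(\Q_p))\cap\overline{J(\Q)}$. Since $p$ is a prime of good reduction, $C(\Q_p)$ is covered by the finitely many residue discs over $\overline{C}(\F_p)$. Fix one disc and a uniformizer $t$ on it, and expand $\iota^*\omega=w(t)\,dt$ with $w$ a power series in $\Z_p[[t]]$ up to scaling. Because $\iota^*\omega\neq0$, $w$ is not identically zero. Hence its formal antiderivative $f(t)$ is a nonzero power series that converges on the disc. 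Strassmann's theorem, or the Newton polygon, then shows that $f$ has finitely many zeros in $p\Z_p$.

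Summing over the finitely many discs gives finiteness. The map $\iota$ is injective for $g\ge1$, so finiteness of the image gives finiteness of the set of points on $C$. The main obstacle is Step 2. There one must argue carefully that the closure has dimension at most $r$, working on the subgroup $U$ where $\log$ is an isomorphism, and then pass back to all of $\overline{J(\Q)}$ via finite index as in Step 3. A secondary point is checking that $\iota^*\omega$ is nonzero, which holds because $\iota^*$ is an isomorphism on differentials.
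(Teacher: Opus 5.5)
Your proof is correct and follows the same route as the paper. The paper only cites the theorem and sketches the Coleman-style argument: since $\rank J(\Q)<g$, linear algebra gives a nonzero $\omega$ whose integral vanishes on the $p$-adic closure of $J(\Q)$, and pulling back along $\iota$ gives a nonconstant locally analytic function with finitely many zeros, which is what your Steps 1--4 carry out in detail. Your detour through the finite-index subgroup $U$ is harmless but unnecessary: $\log$ is a continuous homomorphism on all of $J(\Q_p)$ that kills torsion, so $\log(\overline{J(\Q)})$ already lies in the $\Q_p$-span of the images of $r$ generators.
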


Coleman made this method effective by developing a theory of \(p\)-adic integration \cite{coleman1985effective}. Integration gives a homomorphism
\[
J(\Q_p)
\longrightarrow
H^0(C_{\Q_p},\Omega^1)^\vee,
\qquad
D\longmapsto\left(\omega\longmapsto\int_D\omega\right).
\]
The Abel--Jacobi map is compatible with integration on the curve, as expressed by the following commutative diagram.
\[\begin{tikzcd}[column sep=4.5em]
	{C(\Q)} & {C(\Q_p)} \\
	{J(\Q)} & {J(\Q_p)} &  {H^0(C_{\Q_p},\Omega^1)^\vee\hspace{-.5cm}}
	\arrow[from=1-1, to=1-2]
	\arrow["\iota", from=1-1, to=2-1]
	\arrow["{\iota_p}", from=1-2, to=2-2]
	\arrow["{\omega\mapsto\int_{P_0}^\bullet\omega}", from=1-2, to=2-3]
	\arrow[from=2-1, to=2-2]
	\arrow["{D\mapsto(\omega\mapsto\int_D\omega)}"'{pos=0.5}, from=2-2, to=2-3]
\end{tikzcd}\]
When the rank of $J$ is less than the genus of $C$, linear algebra produces a nonzero regular differential whose integral vanishes on the $p$-adic closure of \(J(\Q)\). Pulling this integral back to \(C(\Q_p)\) gives a nonconstant locally analytic function whose zero set contains \(C(\Q)\). The finiteness of its zero set is the basis of the Chabauty--Coleman method.

Hirakawa and Matsumura \cite{MR3860476, MR4474851} use the bound on $|C(\Q)|$ produced by this method to determine the rational points on the genus $2$ curve that appears in their article. For our curve, however, the rank of \(J\) equals the genus of \(C\), so the classical Chabauty--Coleman method gives no conclusion. We therefore turn to Kim's nonabelian Chabauty framework \cite{kim2009Selmer}, which constructs nested sets
\[
C(\Q_p)_1\supseteq C(\Q_p)_2\supseteq C(\Q_p)_3\supseteq\cdots\supseteq C(\Q).
\]
Kim conjectured that these sets eventually become finite and, ultimately, equal \(C(\Q)\). The case of $n=2$ is known as \emph{quadratic Chabauty}. Balakrishnan and Dogra \cite{Balakrishnan2018QC1} showed that quadratic Chabauty applies under the rank condition
\[
\rank J < g - 1 + \rank \operatorname{NS}(J),
\]
where $\operatorname{NS}(J)$ denotes the N\'eron--Severi group of $J$ over $\Q$. In our case, the rational isogeny $J \to E_1 \times E_2$ implies that $\rank \operatorname{NS}(J) \geq 2$. The above inequality therefore holds.

Bianchi further developed an explicit quadratic Chabauty construction for genus \(2\) bielliptic
curves whose two elliptic quotients have rank \(1\) \cite{bianchi2020bielliptic}. Bianchi and Padurariu supplied a precision analysis and general implementation \cite{bianchi2022rational}; we use a modified version \cite{finnertycode} that was used in \cite{MR5070722}. The \texttt{README} of  \cite{finnertycode} details the differences between the versions. For a prime $p$ of good reduction for $C$ at which both elliptic curve quotients have ordinary reduction, the method produces an explicitly computable locally analytic function
\[
\widetilde{\rho}_p \colon C(\Q_p)\longrightarrow\Q_p
\]
and a finite set \(\Omega_p\subseteq\Q_p\) such that the rational points on \(C\) are contained in
\begin{equation*}
     A_p = \{z\in C(\Q_p):\widetilde{\rho}_p(z)\in\Omega_p\}.
 \end{equation*}
The function \(\widetilde{\rho}_p\) is defined using \(p\)-adic logarithms and local \(p\)-adic heights on \(E_1\) and \(E_2\). The set \(\Omega_p\) records the possible local height contributions at the primes of bad reduction.

We apply this construction for the curve given by \eqref{E:C_eq} at the good ordinary primes $p=5,7,$ and $11$, using working precision $O(p^{25})$. The computation recovers the twelve rational points of Proposition~\ref{prop:rational-points}, along with $296$, and $580$ extra $p$-adic candidates at $p=7$ and $p=11$, respectively. Since $\phi_j\circ\theta=\pm\phi_j$ for every automorphism $\theta\colon(x,y)\mapsto(\pm x,\pm y)$ of $C$, we have $\widetilde{\rho}_p\circ\theta=\widetilde{\rho}_p$, so each $A_p$ is stable under the group $G$ generated by these automorphisms. At $p=5$, we therefore record only one representative of each $G$-orbit, obtaining $42$ of them and hence $168$ candidate $p$-adic candidate. 

For the subsequent Mordell--Weil sieve, the logarithmic computation determines the two free Mordell--Weil coefficients modulo \(p^4\); the accompanying \texttt{SageMath} code verifies that the recovered pair is determined to absolute precision at least $p^4$ and aborts otherwise. It remains to prove that none of the extra \(p\)-adic candidates arise from a rational point.

To extract Mordell--Weil congruence information from these points, we use the base point $P_0=\infty^-$ as before. By \eqref{E:MW-group}, every \(P\in C(\Q)\) has a unique expression
\begin{equation}\label{E:AJ-coordinates}
\iota(P)=n_1D_1+n_2D_2+eT,
\qquad
n_1,n_2\in\Z,\quad e\in\{0,1,2\}.
\end{equation}
The quadratic Chabauty computation gives congruence information about the coefficients \(n_1\) and \(n_2\). Indeed, for \(j\in\{1,2\}\), applying the \(p\)-adic logarithm gives
\[
\operatorname{Log}_p(\phi_j(P))-\operatorname{Log}_p(\phi_j(\infty^-))
=
n_1\operatorname{Log}_p(\phi_{j,*}D_1)
+n_2\operatorname{Log}_p(\phi_{j,*}D_2),
\]
where \(\phi_{j,*} \colon J \to E_j\) denotes the homomorphism induced by the quotient map \(\phi_j \colon C\to E_j\). Here we use that the logarithm vanishes on torsion. If an extra \(p\)-adic candidate \(z \in A_p\) comes from some \(P\in C(\Q)\), then evaluating the left-hand sides at \(z\) yields a necessary congruence for \((n_1,n_2)\) modulo \(p^4\). Since $(n_1,n_2)\in\Z$, a candidate whose recovered pair is \emph{not} $p$-adically integral cannot arise from a rational point and is discarded. Thus every extra \(p\)-adic candidate determines a candidate congruence class in the free part of \(J(\Q)\).

There is also compatibility between the computations at the different primes. The primes of bad reduction relevant to the local height calculation are \(2\) and \(3\), and the elements of \(\Omega_p\) are indexed by \(28\) possible pairs of rational numbers \((v_2,v_3)\) through expressions of the form
\[
v_2\log_p(2)+v_3\log_p(3).
\]
The 28 pairs are exactly the products of the four admissible values of $v_2$ with the seven admissible values of $v_3$, so no two classes coincide. The indexing pairs \((v_2, v_3)\) are independent of \(p\). Consequently, if points in \(A_5\), \(A_7\), and \(A_{11}\) arise from the same rational point of \(C\), then they must belong to the same local height class. Eight of the \(28\) classes contain no surviving candidate with coefficients that are $p$-adically integral for at least one of these three primes and therefore cannot contain an additional rational point.

For each of the remaining \(20\) classes, we combine the congruences obtained at \(p=5\) and \(p=11\) using the Chinese remainder theorem. With
\begin{equation}\label{E:M0}
M_0=5^4 \cdot 11^4=9150625,
\end{equation}
we obtain \(792\) possible pairs
\(
(n_1,n_2)
\) 
modulo $M_0$. Since \(M_0\) is relatively prime to \(3\), multiplication by \(M_0\) is an automorphism of the torsion subgroup \(\langle T \rangle\). Thus the torsion coordinate in \eqref{E:AJ-coordinates} disappears upon passage to \(J(\Q)/M_0J(\Q)\), and the \(792\) pairs above determine a set
\[
\mathcal C_{M_0}\subseteq J(\Q)/M_0J(\Q)
\]
of candidate classes. 

\subsection{Applying the Mordell--Weil sieve} \label{S:3.3} 

Suppose that \(P\in C(\Q)\) is not one of the twelve rational points already found. Then, for each of the quadratic Chabauty primes, \(P\) must correspond to one of the extra \(p\)-adic candidates considered above. Consequently, we obtain
\begin{equation*}
\pi_{M_0}(\iota(P))\in\mathcal C_{M_0},
\end{equation*}
where
\(
\pi_{M_0} \colon J(\Q)\longrightarrow J(\Q)/M_0J(\Q)
\)
is the quotient map. We eliminate these candidate classes using the Mordell--Weil sieve of Bruin and Stoll \cite{MR2685127}; see also \cite[Section~4.3]{bianchi2022rational}.

We briefly recall the sieve criterion. Let \(M\) be a positive integer, let
\[
\mathcal C_M\subseteq J(\Q)/MJ(\Q),
\]
and let \(S\) be a finite set of primes of good reduction for \(C\). For each \(\ell\in S\), reduction induces maps \(\alpha_{\ell,M}\colon
J(\Q)/MJ(\Q)
\longrightarrow
J(\F_\ell)/MJ(\F_\ell)\) and \(\beta_{\ell,M}\colon
C(\F_\ell)
\longrightarrow
J(\F_\ell)/MJ(\F_\ell)\). 
Writing
\[
\alpha_S=\prod_{\ell\in S}\alpha_{\ell,M},
\qquad
\beta_S=\prod_{\ell\in S}\beta_{\ell,M},
\]
the compatibility of the Abel--Jacobi map with reduction is expressed by the following commutative diagram.
\[
\begin{tikzcd}
C(\Q)
    \arrow[r,"\pi_M\circ\iota"]
    \arrow[d,"\operatorname{red}_S"']
&
J(\Q)/MJ(\Q)
    \arrow[d,"\alpha_S"]
\\
\displaystyle\prod_{\ell\in S}C(\F_\ell)
    \arrow[r,"\beta_S"']
&
\displaystyle\prod_{\ell\in S}
J(\F_\ell)/MJ(\F_\ell).
\end{tikzcd}
\]
It follows that a class \(c\in\mathcal C_M\) can contain the Abel--Jacobi image of a rational point only if
\[
\alpha_S(c)\in
\beta_S\left(
\prod_{\ell\in S}C(\F_\ell)
\right).
\]
Equivalently, a candidate is eliminated as soon as this necessary local condition fails at one sieve prime. In particular, no rational point can map into \(\mathcal C_M\) if
\begin{equation*}
\alpha_S(\mathcal C_M)
\cap
\beta_S\left(
\prod_{\ell\in S}C(\F_\ell)
\right)
=
\emptyset.
\end{equation*}

In the implementation of the algorithm, we work with the image $G_\ell=\im(J(\Q)\rightarrow J(\mathbb{F}_\ell))$ and the quotient $G_\ell/MG_\ell$, intersecting the reduced curve with $G_\ell$ first. The reduction of a rational point lies in this image and the map $G_\ell/MG_\ell\rightarrow J(\F_\ell)/MJ(\F_\ell)$ is not necessarily injective, so the criterion is valid and in general finer than the classical sieve described in full above.

Returning to the present problem, we apply this criterion to the candidates obtained in the quadratic Chabauty analysis, keeping the \(20\) local height classes separate. We choose the set of sieving primes $S$ by the criterion of \cite[Section~4.3]{bianchi2022rational}: primes $\ell\leq10^4$ of good reduction for which $|(J(\F_\ell)/MJ(\F_\ell))|/(\ell+1) > 2$. At modulus $M_0$ we use the set computed for $M'p_1^4p_2^4p_3^4$, as prescribed there; since $M_0$ divides that modulus, this set contains every prime the criterion for $M_0$ alone would identify. For \(19\) of our \(20\) local height classes, reduction at the good primes \(29\), \(61\), \(1279\), and \(3547\) eliminates every candidate modulo \(M_0\). For the remaining class, the sieve leaves four candidates in $J(\Q)/M_0J(\Q)$. To eliminate these, we refine the modulus to
\[
M_1=3M_0.
\]
Each class in \(J(\Q)/M_0J(\Q)\) has \(27\) lifts to \(J(\Q)/M_1J(\Q)\): there are three possible lifts of each of the two free coordinates and, since \(3\mid M_1\), three possible torsion coordinates. Thus the four surviving classes give \(4\cdot27=108\) candidate classes modulo \(M_1\). Applying the Mordell--Weil sieve at the good primes \(5\) and \(7\) eliminates all \(108\) candidates.

Thus, none of the extra \(p\)-adic candidates obtained from quadratic Chabauty at \(p=5\), \(7\), or \(11\) can arise from a rational point. It follows that the twelve rational points already found are all the rational points on \(C\). This proves Proposition~\ref{prop:rational-points}.

\section{Proofs of Theorems~\ref{thm:main} and~\ref{thm:cone-cylinder}} \label{S:4}

We now combine the work of Sections~\ref{S:2} and~\ref{S:3} to complete the proofs of our two theorems.

\begin{proof}[Proof of Theorem~\ref{thm:main}]
As in Section~\ref{S:2}, we normalize the base side length of the prism to be \(1\). A geometric solution gives an affine point \((x,y)\in C(\Q)\) with \(x>0\). By Proposition~\ref{prop:rational-points}, the only positive values of \(x\) that can occur are
\[
x=1
\quad\text{and}\quad
x=\frac32.
\]
In Section~\ref{S:2}, we noted that the value \(x=3/2\) gives \(h=-10/3\), so it cannot yield a
geometric solution. Similarly, the point \((1,1)\) gives \(h=b=0\). On the other hand, the point \((1,-1)\) gives the nondegenerate solution
\[
h=\frac65,\quad b=\frac25,\quad s=\frac{13}{10}.
\]
This is the unique normalized solution for which all defining dimensions are positive. If the original prism has base side length \(a\), restoring the scale gives
\[
\text{pyramid: base side }a,\ \text{height }\frac{6a}{5},
\qquad
\text{prism: base side }a,\ \text{height }\frac{2a}{5}.
\]
Writing \(a=10\lambda\) gives the dimensions stated in the theorem. The converse follows by substituting these dimensions into the formulas for surface area and volume.
\end{proof}

\begin{proof}[Proof of Theorem~\ref{thm:cone-cylinder}]
As observed at the end of Section~\ref{S:2}, after normalizing the diameter of the cylinder to be \(1\), the cone-and-cylinder problem gives exactly the same equations \eqref{E:Area}--\eqref{E:Slant}. Hence the unique normalized positive solution has cone diameter \(1\), cone height \(6/5\), cylinder diameter \(1\), and cylinder height \(2/5\). Restoring the scale and writing the common diameter as \(10\lambda\), the cone and cylinder both have radius \(5\lambda\), while their heights are \(12\lambda\) and \(4\lambda\), respectively. This is the family stated in Theorem~\ref{thm:cone-cylinder}.
\end{proof}

\bibliographystyle{amsplain}
\bibliography{references}

\end{document}